\theoremstyle{plain}
\theoremstyle{remark}
\DeclareMathOperator{\Hom}{Hom}
\DeclareMathOperator{\Gal}{Gal}
\DeclareMathOperator{\Aut}{Aut}
\DeclareMathOperator{\ad}{ad}
\DeclareMathOperator{\Ext}{Ext}
\DeclareMathOperator{\End}{End}
\DeclareMathOperator{\tr}{tr}
\DeclareMathOperator{\Lie}{Lie}
\DeclareMathOperator{\Fr}{Fr}
\DeclareMathOperator{\GL}{GL} 
\DeclareMathOperator{\SL}{SL}
\newcommand{\cD}{{\mathcal D}}
\newcommand{\cL}{{\mathcal L}}
\newcommand{\cT}{{\mathcal T}}
\newcommand{\frg}{{\mathfrak g}}
\newcommand{\bbF}{{\mathbb F}}
\newcommand{\bbG}{{\mathbb G}}
\newcommand{\bbQ}{{\mathbb Q}}
\newcommand{\bbZ}{{\mathbb Z}}
\begin{document}

\author[R. Guralnick]{Robert Guralnick}
\address{Department of Mathematics, University of Southern California, Los Angeles, CA 90089-2532, USA}
\email{guralnic@usc.edu}
\author[F. Herzig]{Florian Herzig}
\address{Institute for Advanced Study, Einstein Drive, Princeton, NJ 08540, USA}
\email{herzig@math.ias.edu}
\author[R. Taylor]{Richard Taylor}
\address{Department of Mathematics, Harvard University, 1 Oxford Street, Cambridge, MA 02138, USA}
\email{rtaylor@math.harvard.edu}
\author[J. Thorne]{Jack Thorne}
\address{Department of Mathematics, Harvard University, 1 Oxford Street, Cambridge, MA 02138, USA}
\email{thorne@math.harvard.edu}
\title{Appendix: Adequate subgroups}

\thanks{The first author was partially supported by NSF grants DMS-0653873 and DMS-1001962. The second author was partially
  supported by NSF grant DMS-0902044 and agreement DMS-0635607.  The third author was partially supported by NSF grant
  DMS-0600716 and by the IAS Oswald Veblen and Simonyi Funds. The second and third authors are grateful to the IAS for its
  hospitality during some of the work on this appendix.}

\maketitle
\thispagestyle{myheadings}
\setcounter{page}{60}
\theoremstyle{plain} 
\newtheorem{lm}{Lemma}
\newtheorem{prop}[lm]{Proposition}
\newtheorem{thm}[lm]{Theorem}

\renewcommand{\labelenumi}{(\roman{enumi})}

\newcommand{\fl}{\bbF_l}
\newcommand{\flb}{\overline{\bbF}_l}
\newcommand{\wt}{\widetilde}
\renewcommand{\o}{\overline}
\newcommand{\ang}[1]{\langle #1 \rangle}
\newcommand{\op}{^\mathrm{op}}
\renewcommand{\ss}{^\mathrm{ss}}
\newcommand{\mar}[1]{\marginpar{\tiny #1}}
\renewcommand{\(}{\textup{(}}
\renewcommand{\)}{\textup{)}}
\newcommand{\CC}{\textup{(C)}}

Let $l$ be a prime, and let $\Gamma$ be a finite subgroup of $\GL_n(\flb) = \GL(V)$. 
With these assumptions we say that \emph{Condition~\CC\ holds} if for every irreducible $\Gamma$-submodule $W \subset \ad^0 V$ there
exists an element $g \in \Gamma$ with an eigenvalue $\alpha$ such that $\tr e_{g, \alpha} W \neq 0$.
Here, $e_{g,\alpha}$ denotes the projection to the generalised $\alpha$-eigenspace of~$g$.
This condition arises in the definition of adequacy in section~2.

Let $\Gamma\ss$ denote the subset of $\Gamma$ consisting of the elements that are semisimple (i.e.\ of order prime to~$l$).

\begin{lm} Suppose that $\Gamma$ acts irreducibly on $V$. The following are equivalent.
  \begin{enumerate}
  \item Condition \CC.
  \item For every irreducible submodule $W \subset \ad^0 V$ there exists $g \in \Gamma\ss$ and $\alpha \in \flb$
    such that $\tr e_{g, \alpha} W \neq 0$.
  \item The set $\Gamma\ss$ spans $\ad V$ as an $\flb$-vector space.
  \end{enumerate}
\end{lm}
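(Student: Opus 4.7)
The strategy is to prove (i)$\Leftrightarrow$(ii) via Jordan decomposition, and (ii)$\Leftrightarrow$(iii) via duality under the trace form on $\ad V$.

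For (i)$\Leftrightarrow$(ii): since $\Gamma$ is finite, every $g\in\Gamma$ admits a Jordan decomposition $g=su=us$ with $s$ the prime-to-$l$ part and $u$ the $l$-part of $g$, both lying in the cyclic group $\langle g\rangle\subseteq\Gamma$. Here $s\in\Gamma\ss$ is semisimple, $u$ is unipotent, and since $u$ preserves each $s$-eigenspace and acts unipotently on it, the generalized $\alpha$-eigenspace of $g$ is precisely the $\alpha$-eigenspace of $s$. Therefore $e_{g,\alpha}=e_{s,\alpha}$, which identifies the projectors appearing in (i) and (ii) and so identifies the two conditions.

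For (ii)$\Leftrightarrow$(iii) I would use the non-degenerate, $\Gamma$-equivariant trace form $\langle X,Y\rangle=\tr(XY)$ on $\ad V$, and set $F\subseteq\ad V$ to be the $\flb$-span of $\Gamma\ss$. Two key observations: first, $\Id\in F$ (take $1\in\Gamma\ss$), so $F^\perp\subseteq(\flb\cdot\Id)^\perp=\ad^0 V$; second, $e_{s,\alpha}\in F$ for every $s\in\Gamma\ss$ and every eigenvalue $\alpha$. The second observation is the key point: the distinct eigenvalues $\alpha_1,\dots,\alpha_k$ of the semisimple element $s$ allow Lagrange interpolation to express $e_{s,\alpha_i}$ as a polynomial in $s$, and each power $s^j$ is again semisimple and so in $\Gamma\ss$.

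Granted these, (iii)$\Rightarrow$(ii) is immediate: if $\Gamma\ss$ spans $\ad V$, then for $0\neq w\in W$, non-degeneracy of the trace form produces $s\in\Gamma\ss$ with $\tr(sw)\neq 0$, and expanding $s=\sum_\alpha\alpha\,e_{s,\alpha}$ produces some $\alpha$ with $\tr(e_{s,\alpha}w)\neq 0$. Conversely, if $F\neq\ad V$ then $F^\perp$ is nonzero and, by the first observation, is a nonzero $\Gamma$-submodule of $\ad^0 V$; any irreducible $\Gamma$-submodule $W\subseteq F^\perp$ then satisfies $\tr(e_{s,\alpha}w)=0$ for all $s\in\Gamma\ss$, $\alpha$, and $w\in W$ by the second observation, contradicting (ii).

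The main technical step is the Lagrange interpolation observation that $F$ contains all the projectors $e_{s,\alpha}$; once in hand, the duality argument is routine, exploiting that $\Id\in F$ forces $F^\perp\subseteq\ad^0 V$, which is exactly what lets condition (ii) — a statement about submodules of $\ad^0 V$ — control the full span in (iii).
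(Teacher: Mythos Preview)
Your proof is correct and follows essentially the same route as the paper's: Jordan decomposition for (i)$\Leftrightarrow$(ii), and for (ii)$\Leftrightarrow$(iii) the annihilator $F^\perp$ of the span $F$ of $\Gamma\ss$ under the trace form, together with the two facts that $e_{s,\alpha}$ is a polynomial in $s$ (so lies in $F$) and $s=\sum_\alpha \alpha\, e_{s,\alpha}$. The paper packages the last equivalence slightly differently, by writing $F^\perp$ in the two forms \eqref{eq:1} and \eqref{eq:2} and reading off (ii)$\Leftrightarrow F^\perp=0\Leftrightarrow$(iii) directly, but the content is identical; your version just makes the Lagrange interpolation and the fact that powers of $s$ remain in $\Gamma\ss$ more explicit.
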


\begin{proof}
  Note that for any $g \in \Gamma$, $\Gamma$ contains both its semisimple and unipotent parts $g_s$ and $g_u$, respectively.
  (They are powers of $g$, as we work over $\flb$.)
  Since $e_{g,\alpha} = e_{g_s,\alpha}$ for all $g \in \Gamma$, the first two conditions are equivalent.

  To show that the last two conditions are equivalent, let $Z \subset \ad V$ be the span of the semisimple elements
  in~$\Gamma$.  Let $U$ denote the annihilator of $Z$ under the (non-degenerate, $\Gamma$-invariant) trace pairing:
  \begin{align}
    U &= \{ w \in \ad V : \tr (gw) = 0 \quad \forall g \in \Gamma\ss \} \label{eq:1} \\
    &= \{ w \in \ad V : \tr (e_{g,\alpha}w) = 0 \quad \forall g \in \Gamma\ss,\ \alpha \in \flb \}, \label{eq:2}
  \end{align}
  where we used that $e_{g,\alpha}$ is a polynomial in~$g$ and that $g = \sum \alpha e_{g,\alpha}$ for $g$ semisimple.
  
  Note that $U \subset \ad^0 V$ by taking $g = 1$ in~\eqref{eq:1}. From~\eqref{eq:2} it thus follows that the second
  condition is equivalent to $U = 0$.  Equivalently, $Z = \ad V$, which is the third condition.
\end{proof}

\begin{lm}\ 
  \begin{enumerate}
  \item
    Suppose that $\Gamma$ acts irreducibly on $V$. Condition \CC\ holds whenever $\Gamma$ has order prime to~$l$.
  \item
    Suppose that $V$, $V'$ are finite-dimensional vector spaces over $\flb$ and that
    $\Gamma \subset \GL(V)$, $\Gamma' \subset \GL(V')$ are finite subgroups that act irreducibly.
    If they both satisfy \CC, then the image of $\Gamma \times \Gamma'$ in $\GL(V \otimes V')$ also satisfies \CC.
\end{enumerate}
\end{lm}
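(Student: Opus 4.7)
The plan in both parts is to apply characterization~(iii) of the preceding lemma, which reduces Condition~\CC\ to the assertion that $\Gamma\ss$ $\flb$-linearly spans $\ad V$.

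For part~(i), since $\Gamma$ has order prime to~$l$ every element is semisimple, so $\Gamma\ss = \Gamma$. Because $\flb$ is algebraically closed and $\Gamma$ acts irreducibly on~$V$, Burnside's theorem (a special case of the Jacobson density theorem) shows that the $\flb$-linear span of $\Gamma$ in $\End(V) = \ad V$ is all of $\ad V$. The previous lemma then gives~\CC.

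For part~(ii), I would first observe that $V \otimes V'$ is absolutely irreducible as a module for the image of $\Gamma \times \Gamma'$: by Schur's lemma $\End_\Gamma(V) = \End_{\Gamma'}(V') = \flb$, and it follows that $\End_{\Gamma \times \Gamma'}(V \otimes V') = \flb$, so in particular $V\otimes V'$ is irreducible. Under the canonical identification $\ad(V \otimes V') = \ad V \otimes \ad V'$, an element $(g, g') \in \Gamma \times \Gamma'$ acts as $g \otimes g'$. The key observation is that if $g \in \Gamma\ss$ and $g' \in (\Gamma')\ss$, then both $g, g'$ are diagonalizable over $\flb$, hence so is $g \otimes g'$; equivalently, the order of $g \otimes g'$ divides $\lcm(|g|, |g'|)$ and is therefore prime to~$l$, so $g\otimes g'$ lies in the semisimple part of the image. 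By hypothesis and characterization~(iii), $\Gamma\ss$ spans $\ad V$ and $(\Gamma')\ss$ spans $\ad V'$, so the elements $g \otimes g'$ span $\ad V \otimes \ad V' = \ad(V \otimes V')$. A second application of the lemma then delivers~\CC\ for the image of $\Gamma \times \Gamma'$.

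Neither part really presents an obstacle. The whole point is that characterization~(iii) of the preceding lemma recasts~\CC\ as a spanning statement, and spanning is compatible both with Burnside's theorem (which handles the prime-to-$l$ case at a single stroke) and with tensor products (which handles the tensor product case, via the fact that a tensor product of diagonalizable operators is diagonalizable). The only subtle point worth double-checking is that ``semisimple in the image of $\Gamma\times\Gamma'$'' really is implied by $g,g'$ being individually semisimple, which is the order-of-tensor-product observation made above.
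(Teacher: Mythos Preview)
Your argument is correct and follows the same route as the paper: both parts are reduced to characterization~(iii) of the preceding lemma, part~(i) via Burnside and part~(ii) via the observation that a tensor product of semisimple elements is semisimple. You are in fact a bit more careful than the paper in explicitly checking that $V \otimes V'$ is irreducible for the image of $\Gamma \times \Gamma'$, which is needed in order to invoke that lemma.
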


\begin{proof}
  By Burnside's theorem, $\Gamma$ spans $\ad V$. If $\Gamma$ has order prime to~$l$, then every element is
  semisimple, so the lemma above applies.

  The second part of the proposition follows on noting that if $g$, $h$ are semisimple elements then $g \otimes h$ is
  semisimple, and appealing to the third characterization of condition \CC\ in the lemma above.
\end{proof}

Next we establish some preliminary results to prepare for our main theorem.

\begin{lm}\label{lm:torus}
  Suppose that $T$ is a torus over $\fl$. Let $X^* = X^*(T_{/\flb})$ and $X_* = X_*(T_{/\flb})$.
  There is a natural action of Frobenius $\Fr$ as an automorphism of $X^*$ and $X_*$.
  Suppose that $\Delta_* \subset X_*$ is a finite subset that is stable under the action of $\Fr$ and spans $X_* \otimes \bbQ$.
  \begin{enumerate}
  \item If $\mu \in X^*$ with $|\ang{\mu,\delta}|<l-1$ for all $\delta \in \Delta_*$ then $\mu(T(\fl))$ is trivial iff $\mu = 0$.
  \item If $V$ is a $T_{/\flb}$-module and all the weights $\mu$ of~$T_{/\flb}$ on~$V$ satisfy $|\ang{\mu,\delta}|<(l-1)/2$ for
    all $\delta \in \Delta_*$ then the $\flb$-span of $T(\fl)$ in $\ad V$ equals the $\flb$-span of $T(\flb)$.
\end{enumerate}
\end{lm}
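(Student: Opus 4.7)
The plan is to exploit the natural identification $T(\flb) = X_* \otimes_\bbZ \flb^\times$ coming from the split structure of $T_{/\flb}$, under which a character $\mu \in X^*$ is the map $\delta \otimes a \mapsto a^{\ang{\mu,\delta}}$, and the arithmetic Frobenius $\phi$ acts by $\delta \otimes a \mapsto \Fr(\delta) \otimes a^l$, so that $T(\fl) = T(\flb)^\phi$.

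For part (i), the key step is to construct enough elements of $T(\fl)$ using the $\Fr$-orbit structure. Given $\delta \in \Delta_*$ whose $\Fr$-orbit has size $k$, for any $a \in \bbF_{l^k}^\times$ the ``orbit norm'' $t(a) := \sum_{i=0}^{k-1} \Fr^i(\delta) \otimes a^{l^i}$ is $\phi$-fixed, since $\Fr^k(\delta) = \delta$ and $a^{l^k} = a$; hence $t(a) \in T(\fl)$. Applying $\mu$ yields $\mu(t(a)) = a^N$ with $N = \sum_{i=0}^{k-1} l^i \ang{\mu,\Fr^i\delta}$. Triviality of $\mu$ on $T(\fl)$ then forces $(l^k-1) \mid N$ for every $a \in \bbF_{l^k}^\times$. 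The hypothesis $|\ang{\mu,\Fr^i\delta}| < l-1$ (applicable since $\Delta_*$ is $\Fr$-stable, so each $\Fr^i\delta$ lies in $\Delta_*$) gives $|N| < (l-1)(1+l+\cdots+l^{k-1}) = l^k-1$, so $N = 0$. Looking at $N$ modulo $l$, and then dividing by $l$ and iterating, one sees inductively that each $\ang{\mu,\Fr^i\delta} = 0$; in particular $\ang{\mu,\delta} = 0$. Since $\Delta_*$ spans $X_* \otimes \bbQ$, this forces $\mu = 0$.

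For part (ii), I would decompose $V = \bigoplus_\mu V_\mu$ into weight spaces and let $\pi_\mu \in \End V$ denote the associated projections. By linear independence of distinct characters $T(\flb) \to \flb^\times$, the $\flb$-span of $T(\flb)$ in $\End V = \ad V$ is exactly $\bigoplus_\mu \flb \cdot \pi_\mu$. The same will hold for $T(\fl)$ as soon as distinct weights of $V$ restrict to distinct characters of $T(\fl)$, i.e.\ as soon as $\mu - \mu'$ is non-trivial on $T(\fl)$ for each pair of distinct weights $\mu, \mu'$. But $\mu - \mu' \neq 0$, and the hypothesis $|\ang{\mu,\delta}|, |\ang{\mu',\delta}| < (l-1)/2$ yields $|\ang{\mu-\mu',\delta}| < l-1$ for all $\delta \in \Delta_*$, so part (i) applies and the reduction is immediate.

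The main obstacle lies in part (i): carefully setting up the orbit-norm construction to produce Frobenius-fixed points, and extracting individual vanishing of each $\ang{\mu,\Fr^i\delta}$ from the single divisibility $(l^k-1) \mid N$. The bound $l-1$ on the pairings is sharp here, since it makes these integers behave like ``signed digits'' in base $l$ with absolute value less than $l$, so that no carrying occurs and the base-$l$ representation of zero is unique.
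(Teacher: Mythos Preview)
Your proof is correct. For part~(ii) it is essentially identical to the paper's argument: both observe that the span in question is the span of the weight-space projectors, and that distinct weights on~$V$ remain distinct on~$T(\fl)$ by part~(i) applied to their difference.

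For part~(i), however, your route is genuinely different. The paper works on the character side: it invokes the identification $\Hom(T(\fl),\flb^\times)\cong X^*/(l-\Fr)X^*$, writes a trivial-on-$T(\fl)$ character as $\mu=(l-\Fr)\lambda$, and then picks $\delta_1\in\Delta_*$ maximising $|\ang{\lambda,\delta_1}|$; one line of inequalities using $\Fr$-stability of~$\Delta_*$ forces $\lambda=0$. Your argument instead works on the cocharacter side: you manufacture explicit points of $T(\fl)$ as orbit norms $t(a)=\prod_{i=0}^{k-1}(\Fr^i\delta)(a^{l^i})$ for $a\in\bbF_{l^k}^\times$, read off $\mu(t(a))=a^N$ with $N=\sum l^i\ang{\mu,\Fr^i\delta}$, and then use the bound $|N|<l^k-1$ together with a signed base-$l$ digit argument to conclude each pairing vanishes. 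The paper's approach is shorter and more structural (one only needs the description of the character group of $T(\fl)$ and a two-line extremal argument), while yours is more explicit and avoids quoting that description, at the cost of the orbit-norm construction and the digit extraction. Both make the $\Fr$-stability of~$\Delta_*$ do the same work, and both show that the bound $l-1$ is exactly what is needed.
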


\begin{proof}
  We can identify $\Hom(T(\fl),\overline{\bbF}_l^\times)$ with $X^*/(l-\Fr)X^*$.
  To prove the first part, suppose that $|\ang{\mu,\delta}|<l-1$ for $\delta \in
  \Delta_*$ and that $\mu(T(\fl))$ is trivial, so $\mu=(l-\Fr)\lambda$. Choose $\delta_1$ in $\Delta_*$ with
  $|\ang{\lambda,\delta_1}|$ maximal. If $\ang{\lambda, \delta_1} \ne 0$ then 
  \begin{equation*}
    l-1>|\ang{\mu,\delta_1}| \geq l|\ang{\lambda,\delta_1}|-|\ang{\lambda,\Fr^{-1}\delta_1}|
    \geq (l-1)|\ang{\lambda,\delta_1}| \geq l-1,
  \end{equation*}
  a contradiction. Therefore $\ang{\lambda, \delta_1} = 0$, so $\lambda=0$ and $\mu=0$.  In particular we see that if $\mu_1$
  and $\mu_2$ are two elements of $X^*$ with $|\ang{\mu_i,\delta}|<(l-1)/2$ for $\delta \in \Delta_*$ and $i = 1$, $2$ then
  $\mu_1|_{T(\fl)}=\mu_2|_{T(\fl)}$ iff $\mu_1 = \mu_2$. The second part now follows since both subspaces of $\ad V$ equal the
  $\flb$-linear span of the $T_{/\flb}$-equivariant projectors onto the weight spaces of~$T_{/\flb}$ in~$V$.
\end{proof}

\begin{lm}\label{lm:central-isogeny}
  Suppose that $G$ is a connected simply connected semisimple algebraic group over $\flb$ and $\phi : G \to \GL(V)$ a
  finite-dimensional representation. Let $G \supset B \supset T$ denote a Borel and maximal torus, and suppose that
  $|\ang{\mu_1-\mu_2,\alpha^\vee}|<l$ for all weights $\mu_1$, $\mu_2$ of $T$ on $V$ and all simple roots $\alpha$.
  Then there exist connected simply connected semisimple algebraic subgroups $I$ and $J$ of~$G$ such that $G = I \times J$,
  $\phi(J) = 1$, and $\phi$ induces a central isogeny of~$I$ onto its image $\o I$, which is a semisimple algebraic group.
\end{lm}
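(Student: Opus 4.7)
The plan is to reduce to the simple case. Decompose $G = G_1 \times \cdots \times G_k$ into simple simply connected factors, with compatibly chosen maximal tori $T_i \subset G_i$ so that $T = \prod T_i$, and set $\phi_i = \phi|_{G_i}$. For each $i$, $\ker \phi_i$ is a closed normal subgroup scheme of the simple simply connected group $G_i$; the classification of such subgroup schemes yields three mutually exclusive possibilities: (a) $\ker \phi_i = G_i$ (so $\phi_i$ is trivial); (b) $\ker \phi_i \subset Z(G_i)$ (so $\phi_i$ is a central isogeny onto its image); or (c) $\ker \phi_i$ contains the Frobenius kernel of $F \colon G_i \to G_i^{(l)}$, i.e., $\phi_i$ factors through $F$. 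The goal is to show that the weight hypothesis collapses case~(c) into case~(a).

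Suppose $\phi_i$ factors through $F$. Since $F$ acts on characters by multiplication by $l$, every $T_i$-weight of $V$ lies in $l X^*(T_i)$. Hence for any two weights $\mu_1$, $\mu_2$ of $T$ on $V$ and any simple coroot $\alpha^\vee$ of $G_i$, the pairing $\ang{\mu_1 - \mu_2, \alpha^\vee}$ lies in $l\bbZ \cap (-l, l) = \{0\}$ by hypothesis. As $G_i$ is simply connected, the simple coroots of $G_i$ form a $\bbZ$-basis of $X_*(T_i)$, so $\mu_1$ and $\mu_2$ restrict to the same character of $T_i$. Consequently $\phi_i(T_i)$ acts on $V$ through a single character, landing in the scalars $Z(\GL(V))$. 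The induced map $\o{\phi_i} \colon G_i \to \GL(V)/Z(\GL(V))$ then kills the positive-dimensional torus $T_i$, and since $G_i$ is simple as an algebraic group, $\o{\phi_i}$ is trivial. Thus $\phi_i$ takes values in $Z(\GL(V)) \cong \bbG_m$; but the semisimple group $G_i$ admits no non-trivial characters, so $\phi_i$ is itself trivial.

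Set $J = \prod_{i : \phi_i = 1} G_i$ and $I = \prod_{i : \phi_i \text{ a central isogeny}} G_i$. Then $G = I \times J$ and $\phi(J) = 1$, and it remains to verify that $\phi|_I$ is a central isogeny onto $\o{I} = \phi(I)$. For $(g_j)_j \in \ker \phi|_I$, conjugating by $(1, \ldots, h_i, \ldots, 1) \in I$ with arbitrary $h_i \in G_i$ produces another element of the kernel; comparison with the original gives $\phi_i(h_i g_i h_i^{-1}) = \phi_i(g_i)$, so $[h_i, g_i] \in \ker \phi_i \subset Z(G_i)$ for every $h_i$. Since the adjoint group $G_i / Z(G_i)$ has trivial center, this forces $g_i \in Z(G_i)$; hence $\ker \phi|_I \subset Z(I)$, and $\o{I} = I / \ker \phi|_I$ is semisimple. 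The main subtlety is the classification input underlying (a)--(c): in bad characteristics $G_i$ admits non-reduced normal subgroup schemes, and one must argue at the level of group schemes (not merely Lie algebras), which is why the weight hypothesis is stated in terms of group-theoretic weights and is used via Frobenius factorization rather than a direct Lie-algebraic computation.
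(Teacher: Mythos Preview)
Your route is genuinely different from the paper's. The paper does not decompose $G$ into simple factors at all: it sets $J$ equal to the reduced identity component of $\ker\phi$, obtains a complement $I$ via the structure theory of semisimple groups (so $G = I\times J$ by simply-connectedness), and then argues that the isogeny $I\to\o I$ is central by looking at the induced $l$-morphism of root data (Springer, \S9.6.3). If that isogeny were not central, some root of $\o I$ would pull back to $l\mu$ with $\mu$ a nonzero weight, so $l\mu$ would occur as a $T$-weight on $\Lie\o I\subset\ad V$; since $\mu\ne 0$ and $G$ is simply connected, $|\ang{l\mu,\alpha^\vee}|\ge l$ for some simple $\alpha$, contradicting the hypothesis. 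This single step replaces your case analysis (a)--(c) entirely.

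Your factorwise argument is correct for $l\ge 5$ but has a gap for $l\in\{2,3\}$: the trichotomy you invoke is not exhaustive there. In types $B_n$, $C_n$, $F_4$ (when $l=2$) and $G_2$ (when $l=3$) there are \emph{special isogenies} whose kernels are non-central height-one subgroup schemes strictly contained in the Frobenius kernel (the square of such an isogeny is Frobenius). If $\ker\phi_i$ is such a kernel then $\phi_i$ does not factor through $F$, and the $T_i$-weights of $V$ need not lie in $lX^*(T_i)$, so your reduction of (c) to (a) does not apply. The paper's $l$-morphism argument handles this uniformly, because even for a special isogeny at least one $q(\o\alpha)=l$, producing a weight $l\mu$ on $\Lie\o I$. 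You can patch your proof either by assuming $l\ge 5$ (sufficient for the applications in the paper) or by replacing the Frobenius-factorisation step with that root-datum argument applied to each $\phi_i$.

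One smaller point: your conjugation argument for $\ker\phi|_I\subset Z(I)$ is phrased for closed points but must be scheme-theoretic. It does go through functorially once you use that $\ker\phi_i\subset Z(G_i)$ holds as subgroup schemes (your case (b)) and that the scheme-theoretic centre of the adjoint group $G_i/Z(G_i)$ is trivial; it would be worth saying this explicitly.
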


\begin{proof}
  Let $J$ denote the connected component of the kernel of $\phi$ with its reduced scheme structure. Then $J$ is smooth
  (\cite{Mil10}, Proposition I.5.18). By Theorem 8.1.5 of \cite{Spr09} and its proof, $J$ is semisimple and there is a second
  semisimple algebraic group $I \subset G$ which commutes with $J$ and such that $I \times J \rightarrow G$ is a central
  isogeny.  It follows from the simply-connectedness of~$G$ that it is an isomorphism of $I \times J$ onto $G$.  In
  particular, $I$ and $J$ are simply connected. Note that $T = T_I \times T_J$ and that $B = B_I \times B_J$ where
  $(B_I,T_I)$ (resp.\ $(B_J,T_J)$) is a Borel and maximal torus in $I$ (resp.\ $J$). (This follows from the fact that any
  smooth connected soluble subgroup of (resp.\ torus in) $G$ is conjugate to a subgroup of $B$ (resp.\ $T$).) Moreover $U =
  U_I \times U_J$, where $U$ denotes the unipotent radical of $B$. Let $\o{I}$ denote the image of $I$ under $\phi$. Then
  $\o{I}$ is again reduced and connected and hence also smooth.  In fact it is semisimple.  (See Proposition 14.10(1)(c) of
  \cite{Bor91}.) The map $\phi$ factors through an isogeny $I \rightarrow \o{I} \subset \GL(V)$.
  Let $\o B$, $\o T$, $\o U$ denote the images of $B_I$, $T_I$, $U_I$ in $\overline{I}$. Then these are all reduced and hence
  smooth. Moreover $\o T$ is a torus, $\o B$ is connected and soluble, $\o U$ is connected unipotent and $\o B=\o T \o U$. As
  $\dim \o I = \dim I = \dim T_I + 2 \dim U_I = \dim \o T + 2 \dim \o U$ we see that $\o B$ must be a Borel subgroup of~$\o
  I$ with unipotent radical $\o U$ and that $\o T$ is a maximal torus in $\o I$. The isogeny $I \rightarrow \o I$ induces an
  $l$-morphism from the root datum of~$\o I$ to the root datum of~$I$. (See section 9.6.3 of~\cite{Spr09}.) Then $I
  \rightarrow \o I$ is a central isogeny, as otherwise $T$ would have a weight occurring in $\Lie \o I \subset \ad V$ of the form $l \mu$ with
  $\mu$ non-zero and this would contradict our assumption on the weights of~$T$ on~$V$.
\end{proof}

Suppose that we are given $\flb$-vector spaces $W_i$ with $\dim W_i \le l$ for $i = 1$, \dots, $r$.
Then the maps
\begin{align*}
  \exp : X &\mapsto 1+X+\frac{X^2}{2!}+ \cdots + \frac{X^{l-1}}{(l-1)!} \\
  \log : 1+u &\mapsto u - \frac{u^2}{2} + \frac{u^3}{3} \pm \cdots - \frac{u^{l-1}}{l-1}
\end{align*}
define inverse bijections between the set of nilpotent elements in $\prod \End(W_i)$ and the set of unipotent elements in $\prod \GL(W_i)$.

\begin{lm}\label{lm:exp-and-log}
  Suppose that $G \subset \prod \GL(W_i)$ is a connected reductive group over~$\flb$ with $\dim W_i \le l$ for all $i$.
  Let $T$ be a maximal torus and $U$ be the unipotent radical of a Borel subgroup of $G$ that contains $T$. Suppose
  that $|\ang{\mu_1-\mu_2,\alpha^\vee}|<l$ for all weights $\mu_1$, $\mu_2$ of $T$ on $V = \bigoplus W_i$ and
  all simple roots $\alpha$.
  \begin{enumerate}
  \item The maps $\exp$ and $\log$ induce inverse isomorphisms of varieties between $\Lie U \subset \End(V)$ and $U \subset \GL(V)$.
  \item For any positive root $\alpha$ we have $\exp(\Lie U_\alpha) = U_\alpha$.
  \item The map $\exp : \Lie U \to U$ depends only on $G$ and $U$, but not on $V$, $W_i$, or the representation $G
    \hookrightarrow \GL(V)$.
  \item If $\theta$ is an automorphism of $G$ that preserves $T$ and $U$, then we have a commutative diagram:
    \begin{equation*}
      \xymatrix{\Lie U \ar[r]^{d\theta} \ar[d]_{\exp} & \Lie U \ar[d]^{\exp} \\ U \ar[r]^\theta & U }
    \end{equation*}
  \end{enumerate}
\end{lm}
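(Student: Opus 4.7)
The plan is to establish (ii) first via a direct analysis of root-subgroup parametrizations, then to deduce (i) from (ii) using the product decomposition of $U$, and finally to obtain (iii) and (iv) as formal consequences of (i).

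For (ii), I would fix a positive root $\alpha$ and expand the canonical parametrization $u_\alpha\colon \bbG_a \to U_\alpha$ composed with $G \hookrightarrow \prod \GL(W_i)$ as a polynomial $u_\alpha(t) = \sum_{n\ge 0} t^n A_n$ with $A_n \in \prod \End(W_i)$ and $A_0 = 1$. The homomorphism identity $u_\alpha(s+t) = u_\alpha(s) u_\alpha(t)$ forces $A_i A_j = \binom{i+j}{i} A_{i+j}$, giving inductively $A_n = A_1^n/n!$ for $0 \le n < l$; set $X_\alpha := A_1 \in \Lie U_\alpha$. The $T$-equivariance $t \cdot u_\alpha(s) \cdot t^{-1} = u_\alpha(\alpha(t) s)$ shows that $A_n$ sends the $\mu$-weight space of $V$ into the $(\mu+n\alpha)$-weight space; after reducing $\alpha$ to a simple root by Weyl-group conjugation (which permutes weights and preserves the bound), the hypothesis forces $A_n = 0$ for $n \ge l$. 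Hence $u_\alpha(t) = \exp(tX_\alpha)$ and $U_\alpha = \exp(\Lie U_\alpha)$.

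For (i), I would combine (ii) with the decompositions $U = \prod_{\alpha > 0} U_\alpha$ (isomorphism of varieties for any ordering of positive roots) and $\Lie U = \bigoplus_{\alpha > 0} \Lie U_\alpha$. Since $\exp$ and $\log$ are inverse polynomial bijections between nilpotents and unipotents in $\prod \GL(W_i)$, it suffices to verify $\log(U) \subseteq \Lie U$: this gives $U = \exp(\log U) \subseteq \exp(\Lie U)$, and comparison of dimensions with the irreducibility of $U$ then forces $\exp(\Lie U) = U$. For the containment, given $u = u_1 \cdots u_N \in U$ with $u_i \in U_{\alpha_i}$, I would iterate the truncated Baker--Campbell--Hausdorff identity $\log(\exp X \cdot \exp Y) = X + Y + \tfrac12[X,Y] + \cdots$ to write $\log u$ as a polynomial in iterated Lie brackets of the $\log u_i = X_{\alpha_i} \in \Lie U$. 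The weight hypothesis, together with the bound $X^l = 0$ on $V$ for $X \in \Lie U$ coming from $\dim W_i \le l$, will control the nilpotency of $\Lie U$ sufficiently that only BCH terms of degree less than $l$ contribute (with denominators invertible mod $l$); each such bracket lies in $\Lie U$ by closure under the Lie bracket, establishing the containment.

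Parts (iii) and (iv) will be formal. For (iii), I would apply (i) to the direct sum $V \oplus V'$ of two representations both satisfying the hypothesis (whose weights are the union of those of $V$ and $V'$, hence still bounded); the intrinsic element $\exp(X) \in U$, characterized as the unique $u \in U$ whose image under the faithful embedding is the power-series exponential of the image of $X$, then agrees whether computed in $V$, $V'$, or $V \oplus V'$. For (iv), I would apply (iii) to the two faithful embeddings $\phi$ and $\phi \circ \theta$ of $G$ into $\GL(V)$, both of which satisfy the hypothesis since $\theta$ permutes simple roots and hence $\theta^*$ preserves the weight bound. Equality of the two intrinsic exp maps $\Lie U \to U$ supplied by (iii) unravels directly to the commutative diagram $\theta \circ \exp = \exp \circ d\theta$.

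The main obstacle is the BCH step in part (i): one must verify that under the weight hypothesis only finitely many BCH terms arise, all with denominators coprime to $l$, so that the formal identity $\log(uv) = \mathrm{BCH}(\log u, \log v)$ becomes an algebraic identity in $\Lie U \subset \End(V)$. This reduces to tracking how iterated brackets of root vectors decompose into root spaces, combined with the ambient nilpotency bound $X^l = 0$ on $V$ that the hypothesis provides.
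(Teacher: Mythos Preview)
Your outline follows the paper's strategy closely. Part (ii) is the same (the paper cites Jantzen II.1.19 for the identity $x_\alpha(a)=\exp(aX_\alpha)$, you derive it by hand); part (iv) is identical; and for (i) both you and the paper rely on BCH/Zassenhaus, the paper showing $\exp(\Lie U)\subseteq U$ and that $\exp(\Lie U)$ is a subgroup, you showing $\log U\subseteq\Lie U$ --- these are dual and equally valid. Your argument for (iii) via $V\oplus V'$ is a genuine simplification of the paper's inductive Zassenhaus argument. To make it airtight you should note that the hypothesis really is inherited by $V\oplus V'$: since the weight set of any $G$-module is invariant under $s_\alpha$, the bound $|\langle\mu_1-\mu_2,\alpha^\vee\rangle|<l$ is equivalent to $|\langle\mu,\alpha^\vee\rangle|<l/2$ for every weight $\mu$, and the latter is obviously stable under taking unions of weight sets.

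The one real gap is the BCH truncation in (i). What you need is that every iterated Lie bracket of length $\ge l$ among elements of $\Lie U\subset\End V$ vanishes, so that only the first $l-1$ BCH terms survive (with denominators prime to $l$). Your proposed justification---``$X^l=0$ for each $X\in\Lie U$, plus tracking root-space decompositions of brackets''---does not give this: the single-element bound $X^l=0$ says nothing about products of distinct elements, and the nilpotency class of $\Lie U$ as an abstract Lie algebra equals the height of the highest root, which the hypotheses on $V$ do not directly bound. The clean fix, and in fact the paper's opening move, is Lie--Kolchin: conjugate so that $U\subset U':=\prod U_i'$ with $U_i'$ the upper-triangular unipotent subgroup of $\GL(W_i)$. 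Then $\Lie U\subset\prod\Lie U_i'$, and since $\dim W_i\le l$, the product in $\End V$ of \emph{any} $l$ elements of $\prod\Lie U_i'$ is zero. Every degree-$\ge l$ BCH (or Zassenhaus) term, being a linear combination of associative monomials of length $\ge l$, therefore vanishes, and your argument then goes through.
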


\begin{proof}
  By the Lie--Kolchin theorem we may suppose $U$ is contained in the group $U'=\prod U_i'$, where $U'_i$ denotes the
  unipotent radical of a Borel subgroup of $\GL(W_i)$. The maps $\exp$ and $\log$ provide mutually inverse isomorphisms of
  varieties between $U'$ and $\Lie U'$. It remains to show that $\exp \Lie U = U$.  Note that the product of any $l$ elements
  of $\Lie U'$ is zero. Thus the Zassenhaus formula (see~\cite{Magnus}, section IV) tells us that to check that $\exp \Lie U
  \subset U$ it suffices to check that for any root~$\alpha$ we have $\exp (\Lie U_\alpha) \subset U$. Let $x_\alpha : \bbG_a
  \to U_\alpha$ be the root homomorphism corresponding to~$\alpha$ and let $X_\alpha = dx_\alpha(1) \in \Lie U_\alpha$. 
  Then formula II.1.19(6) of \cite{Jan87} shows that for $a \in \flb$,
  \begin{equation}\label{eq:4}
    x_\alpha(a) = \sum_{n = 0}^{l-1} a^n \frac{X_\alpha^n}{n!} = \exp(aX_\alpha)
  \end{equation}
  in $\GL(V)$, on noting that for $n<l$ we have $X_{\alpha,n}=X_\alpha^n/n!$ while $X_{\alpha,n}$ acts trivially on $V$ for $n\geq l$.
  (This latter assertion follows from formula II.1.19(5) of \cite{Jan87} because $V_\lambda$ and $V_{\lambda+n \alpha}$
  cannot both be non-zero.) Now by the Baker--Campbell--Hausdorff formula (see section IV.8 in part I of \cite{Serre-LALG})
  and the fact that the product of any $l$ elements of $\Lie U'$ is zero we see that $\exp \Lie U$ is a subgroup of~$U$. As
  $U$ is connected and smooth and $\dim \Lie U \geq \dim U$ we deduce that $\exp \Lie U= U$. This proves the first two
  parts.

  The third part follows inductively from equation~\eqref{eq:4} and the Zassenhaus formula:
  fix a total order $<$ on the set of positive roots such that if $\alpha$, $\beta$, $\alpha+\beta$ are positive
  roots, then $\max(\alpha,\beta) < \alpha+\beta$. We induct on the positive root $\gamma$. Suppose that
  we know that $\exp$ depends only on $G$ and $U$ on the subspace $\bigoplus_{\alpha > \gamma} \Lie U_\alpha$.
  Then the same is true for $\exp(X+Y)$ for any $X \in \Lie U_\gamma$ and $Y \in \bigoplus_{\alpha > \gamma} \Lie U_\alpha$
  by the Zassenhaus formula. (Note that $[\Lie U_\alpha,\Lie U_\beta] \subset \Lie U_{\alpha+\beta}$
  whenever $\alpha$, $\beta$ are positive roots.) This completes the proof of the third part.
    
  The last part follows from the third part, by considering the representation $G \xrightarrow \theta G \hookrightarrow \GL(V)$.
\end{proof}

\begin{lm}\label{lm:autom-lie-alg}
  Suppose that $G$ is a connected simply connected semisimple algebraic group over $\flb$.  Suppose that $l > 3$ and that $G$
  has no simple factor isomorphic to $\SL_n$ with $l | n$. Let $\frg$ denote the Lie algebra of~$G$.  Then $\frg$ contains no
  non-trivial abelian ideal, and the natural map $\Aut(G) \to \Aut(\frg)$ is a bijection. Moreover, a connected normal
  subgroup of~$G$ is preserved by an automorphism $\theta \in \Aut(G)$ if and only if its Lie algebra is preserved by
  $d\theta \in \Aut(\frg)$.
\end{lm}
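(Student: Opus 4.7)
The plan is to reduce to the case where $G$ is simple via its product decomposition, and then invoke classical structural results. Write $G = \prod_{i=1}^r G_i$ with each $G_i$ simply connected and simple, so $\frg = \bigoplus_i \frg_i$ where $\frg_i = \Lie G_i$. Under our hypotheses, the scheme-theoretic center $Z(G_i)$ is \'etale (the only non-\'etale cases with $l > 3$ are $\SL_n$ with $l \mid n$, which are excluded), so $\frz(\frg_i) = \Lie Z(G_i) = 0$. A classical case-by-case check then shows that each $\frg_i$ is simple as a Lie algebra. Consequently every ideal of $\frg$ is a partial direct sum $\bigoplus_{i \in S} \frg_i$, and in particular is non-abelian if non-zero. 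This gives the first claim.

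For the assertion about connected normal subgroups, simple-connectedness and semisimplicity imply that the connected normal subgroups of $G$ are precisely the subproducts $\prod_{i \in S} G_i$, and taking Lie algebras matches these with the ideals $\bigoplus_{i \in S} \frg_i$ of $\frg$. Since this correspondence is canonical and commutes with the action of $\Aut(G)$, a given $\theta \in \Aut(G)$ preserves $H$ if and only if $d\theta$ preserves $\Lie H$.

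The central claim is that $\Aut(G) \to \Aut(\frg)$ is bijective. Any automorphism on either side permutes the simple factors, so after matching permutation types I reduce to the case where $G$ is simple. The main input is then the classical description: for such $G$, both $\Aut(G)$ and $\Aut(\frg)$ are the semidirect product of the adjoint group $G_{\mathrm{ad}}(\flb)$ (acting by inner automorphisms) with the group of Dynkin diagram symmetries. Injectivity follows because $G_{\mathrm{ad}}(\flb)$ acts faithfully on the centerless simple $\frg$ via the adjoint representation, and because diagram automorphisms act non-trivially on the root data; surjectivity follows because every diagram automorphism lifts to $G$ via the Chevalley construction. I expect the main obstacle to be the verification of the structural description of $\Aut(\frg)$ in characteristic $l$: the hypothesis $l > 3$ together with the exclusion of $\SL_n$ with $l \mid n$ is precisely what rules out the extra Lie-algebra automorphisms and abelian ideals that arise in small characteristic.
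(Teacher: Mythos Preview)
Your outline is correct and follows the same route as the paper: decompose $G$ into simple factors, show each $\frg_i$ is simple via $\frz(\frg_i)=0$, reduce to the almost simple case, and there compare $\Aut(G)$ and $\Aut(\frg)$ via the description (inner automorphisms) $\rtimes$ (diagram symmetries). The paper fills in one step you elide in the reduction---namely that $\frg_i\cong\frg_j$ forces $G_i\cong G_j$ (citing Steinberg), without which surjectivity does not follow from the simple case---and it supplies precise references (Hurley, Steinberg, Springer) and an explicit passage to the adjoint form where you invoke unnamed ``classical'' results; your argument for $\frz(\frg_i)=\Lie Z(G_i)=0$ via \'etaleness of the scheme-theoretic center is a clean variant on the paper's citation of tables.
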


Here, $\Aut(G)$ (resp., $\Aut(\frg)$) denotes the abstract group of automorphisms of the algebraic group~$G$ (resp., its Lie
algebra $\frg$). In the proof we use Chevalley groups in the sense of Steinberg's Yale notes \cite{Steinberg-Yale}.

\begin{proof}
  The universal Chevalley group over $\flb$ constructed using the complex semisimple Lie algebra $\cL$ of the same root
  system as $G$ is an algebraic group isomorphic to $G$ (see \cite{Steinberg-Yale}, \S 5). (In the notation of
  \cite{Steinberg-Yale}, we can let $V$ be any representation whose weights span the weight lattice, so that $\cL_\bbZ
  \subset \cL$ is the $\bbZ$-lattice spanned by the fixed Chevalley basis $H_i$, $X_\alpha$; see Cor.~2 on p.~18 of
  \cite{Steinberg-Yale}.) In particular, $\frg \cong \cL_\bbZ \otimes \flb$ (by the remark on p.~64 of
  \cite{Steinberg-Yale}).  Write $G = \prod G_i$ as a product of almost simple simply connected algebraic groups and
  correspondingly $\frg = \bigoplus \frg_i$. Then $Z(\frg_i) = 0$ by our assumption on $l$ and $G$ (see Theorem~2.3 in
  \cite{Hurley}) and hence all $\frg_i$ are simple (\cite{Steinberg-auto}, 2.6(5)). Moreover $\frg_i \cong \frg_j$ implies
  $G_i \cong G_j$ (\cite{Steinberg-auto}, 8.1). The $G_i$ (resp., $\frg_i$) are uniquely characterised as the minimal
  non-trivial connected normal subgroups of $G$ (resp., minimal non-trivial ideals of $\frg$), so they are permuted by
  automorphisms. Therefore if $\Aut(G_i) \to \Aut(\frg_i)$ is a bijection for all $i$, then so is $\Aut(G) \to \Aut(\frg)$,
  and also the final claim of the proposition follows. (Note that any connected normal subgroup is a
  product of some of the $G_i$.) We can thus assume, without loss of generality, that $G$ is almost simple.

  Let $G^{\ad}$ denote the adjoint form of $G$. As $G$ is the universal cover of $G^{\ad}$ and as $G^{\ad} = G/Z(G)$, we have
  $\Aut(G) = \Aut(G^{\ad})$. As $Z(\frg) = 0$ we see that the natural map $\frg \to \Lie G^{\ad}$ is an isomorphism. Thus it
  suffices to show that $\Aut(G) = \Aut(\frg)$ whenever $G$ is simple of \emph{adjoint} type and $\frg = \Lie G$. Thus
  we write $G$ for $G^{\ad}$ from now on.
  
  As an algebraic group $G$ is isomorphic to the adjoint Chevalley group over $\flb$ (again by \cite{Steinberg-Yale}, \S
  5).  (In the notation of \cite{Steinberg-Yale}, we take $V$ to be the adjoint representation $\frg$.) Thus we can identify
  $G(\flb)$ with the subgroup of $\GL(\frg)$ generated by the elements $x_\alpha(t) := \exp(\ad(tX_\alpha))$, where $t \in
  \flb$ and $\alpha$ is any root.  As each $\ad(tX_\alpha)$ is a derivation of $\frg$, the group $G(\flb)$ is actually
  contained in $\Aut(\frg)$.  For any $\eta \in \Aut(\frg)$, we have $\eta \circ \ad X \circ \eta^{-1} = \ad(\eta X)$ in
  $\GL(\frg)$.  It follows that the natural action of $G(\flb) \subset \GL(\frg)$ on $\frg$ agrees with the adjoint action of
  $G(\flb)$ on $\frg \subset \End(\frg)$.

  The choice of Chevalley basis gives rise to a maximal torus $T$ and a Borel $B$ that contains it (\cite{Steinberg-Yale}, \S
  5).
    From Theorem 9.6.2 in~\cite{Spr09} we deduce the following, using that $G$ is adjoint.  For each symmetry $\pi$ of the
  Dynkin diagram $\cD$ there is a unique $\pi' \in \Aut(G)$ that preserves $(B,T)$ and that permutes the $x_{\alpha_i}(1) \in
  B$ according to $\pi$ (where $\alpha_i$ are the simple roots). Moreover, $\Aut(G)$ is the semidirect product of $G$ (acting
  by inner automorphisms) and $\Aut(\cD)$. Also, the elements of $\Aut(\cD)$ biject with the ``graph automorphisms'' of
  $\frg$ (\cite{Steinberg-auto}, \S 3).
 
   The result now follows from (\cite{Steinberg-auto}, 4.2 and 4.5), as the group $\mathfrak H$ in \cite{Steinberg-auto} is
  actually contained in $G(\flb)$ since $\flb$ is algebraically closed (see Lemma 19 on p.~27 of
  \cite{Steinberg-Yale}). (Note that the uniqueness statement in (\cite{Steinberg-auto}, 4.2) is incorrect and seems to be a
  typo.)
\end{proof}

The following proposition may be of independent interest. The proof uses the classification of finite simple groups.
Without it, the proof still goes through for $l$ sufficiently large (depending on~$d$ and ineffective) by appealing
to~\cite{LP} instead of~\cite{Gur99}.

\begin{prop}\label{prop:alg-rep}
  Suppose that $V$ is a finite-dimensional $\flb$-vector space and that $\Gamma \subset \GL(V)$ is a finite subgroup that
  acts semisimply on $V$. Let $\Gamma^0 \subset \Gamma$ be the subgroup generated by elements of $l$-power order. Then $V$
  is a semisimple $\Gamma^0$-module. Let $d \geq 1$ be the maximal dimension of an irreducible $\Gamma^0$-submodule of $V$.
  Suppose that $l \geq 2(d+1)$. Then there exists an algebraic group $G$ over $\fl$ and a semisimple representation
  $r : G_{/\flb} \to \GL(V)$ with the following properties:                                                                                             
  \begin{enumerate}
  \item The connected component $G^0$ is semisimple, simply connected.
  \item $G \cong G^0 \rtimes H$, where $H$ is a finite group of order prime to $l$.
  \item $r(G(\fl)) = \Gamma$.
  \end{enumerate}
  Moreover, if $T \subset G^0$ is a maximal torus and if $\mu$ is a weight of $T_{/\flb}$ on $V$ then
  $\sum |\ang{\mu,\alpha^\vee}| < 2d$, where $\alpha$ ranges over the roots of $G^0_{/\flb}$. 
  Also, $\Gamma$ does not have any composition factor of order~$l$.
\end{prop}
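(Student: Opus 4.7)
The plan is to construct the connected component $G^0$ via a Nori-type approximation theorem applied to $\Gamma^0$, and then bolt on an étale finite group scheme that realizes $\Gamma/\Gamma^0$. Semisimplicity of $V$ as a $\Gamma^0$-module is immediate from Clifford's theorem, since $\Gamma^0$ is characteristic in $\Gamma$. Every element of $\Gamma^0$ of $l$-power order is unipotent on $V$, and such elements generate $\Gamma^0$; because each irreducible $\Gamma^0$-summand has dimension at most $d < l$, Lemma \ref{lm:exp-and-log} applied componentwise provides, for each such unipotent $u$, a truncated logarithm $N = \log u$ with $N^l = 0$ together with a truncated exponential giving a one-parameter subgroup $\lambda_u : \bbG_a \to \GL(V)$ over $\bbF_l$ with $\lambda_u(1) = u$. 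Let $G_1 \subset \GL(V)_{/\flb}$ be the algebraic subgroup generated by the images of the $\lambda_u$. By Nori's theorem in the effective form of \cite{Gur99}, which uses the classification of finite simple groups to make the hypothesis $l \ge 2(d+1)$ sufficient, $G_1$ is connected and semisimple with $\Gamma^0 = G_1(\bbF_l)^+$. Let $G^0 \to G_1$ denote the simply connected cover over $\bbF_l$; Steinberg's surjectivity theorem in our range of $l$ yields $G^0(\bbF_l) \twoheadrightarrow G_1(\bbF_l)^+ = \Gamma^0$, and the induced $r^0 : G^0_{/\flb} \to \GL(V)$ is semisimple.

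To extend to $\Gamma$: the quotient $H = \Gamma/\Gamma^0$ has order prime to $l$, since $\Gamma^0$ contains every $l$-Sylow of $\Gamma$. Each $\gamma \in \Gamma$ permutes the one-parameter subgroups $\lambda_u$ via $\lambda_u \mapsto \lambda_{\gamma u \gamma^{-1}}$, hence preserves $G_1$ and induces an automorphism of $G_1$, which by Lemma \ref{lm:autom-lie-alg} lifts uniquely to an automorphism of $G^0$. Combining this with a Schur--Zassenhaus / cohomology-vanishing argument --- using that $|H|$ is prime to $l$ and that $\mathrm{Out}(G^0)$ is a small finite group governed by the Dynkin diagram of $G^0$ --- we obtain an algebraic action of $H$ on $G^0$ together with a splitting $H \hookrightarrow \Gamma$ compatible with that action. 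Letting $\underline H$ be the constant étale group scheme over $\bbF_l$, we set $G = G^0 \rtimes \underline H$ and extend $r^0$ by sending each $h \in H$ to its chosen lift in $\Gamma$; this produces $r : G_{/\flb} \to \GL(V)$ with $r(G(\bbF_l)) = \Gamma$.

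For the weight bound: if $V_0 \subset V$ is an irreducible $G^0$-subrepresentation and $\mu$ is a weight of $V_0$, then for each positive root $\alpha$ the $\alpha$-string through $\mu$ inside $V_0$ has length $|\langle \mu, \alpha^\vee\rangle| + 1$ by $\mathfrak{sl}_2$-theory; the different strings are confined to the weight spaces of $V_0$ (at most $d$ of them), and a careful accounting using this confinement gives $\sum_\alpha |\langle \mu, \alpha^\vee \rangle| < 2d$. For the composition-factor statement: $G^0(\bbF_l)$ is perfect by Steinberg's theorem (valid in our range of $l$ for $G^0$ simply connected semisimple), so $\Gamma^0$ is perfect, while $|H|$ is prime to $l$; hence $\Gamma$ admits no quotient of order $l$. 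The main obstacle is the effective Nori step: verifying that $l \ge 2(d+1)$ (rather than an unspecified $l \gg_d 1$) already suffices to produce $G_1$ with $\Gamma^0 = G_1(\bbF_l)^+$ requires the classification-of-finite-simple-groups input from \cite{Gur99}, and the surjectivity $G^0(\bbF_l) \twoheadrightarrow \Gamma^0$ requires Steinberg's precise results on simply connected semisimple groups over $\bbF_l$.
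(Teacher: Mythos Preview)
The principal gap is your construction of $H$. You appeal to Schur--Zassenhaus to split the extension $\Gamma^0 \lhd \Gamma$, but that requires $\gcd(|\Gamma^0|,|\Gamma/\Gamma^0|)=1$, and $\Gamma^0$, though generated by $l$-elements, is almost never an $l$-group (already $\SL_2(\fl)$ has order $l(l^2-1)$); a complement to $\Gamma^0$ in $\Gamma$ need not exist at all. The paper proceeds quite differently: it takes $H$ to be the subgroup of $\Gamma$ normalising the images of both $B(\fl)$ and $T(\fl)$, uses the $BN$-pair structure of $G^0(\fl)$ to show that $H\cap\Gamma^0 = T(\fl)/Z_0$ (so $|H|$ is prime to $l$) and that $\Gamma=\Gamma^0 H$, and then lifts the conjugation action of this $H$ on $\Gamma^0$ to an algebraic $\fl$-action on $G^0$ by passing through the Lie algebra and invoking Lemma~\ref{lm:autom-lie-alg}. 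Thus $r$ is not injective on $H$, and $H$ is not a complement to $\Gamma^0$ inside $\Gamma$.

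There are several further loose ends. Your Nori group $G_1\subset\GL(V)_{/\flb}$ carries no evident $\fl$-structure (as $V$ is only an $\flb$-space), so ``$G_1(\fl)^+$'' is not yet meaningful; the paper avoids this by identifying $\Gamma^0/Z(\Gamma^0)$ directly as a product of simple Chevalley groups via Theorem~B of \cite{Gur99} and Hall's lemma, taking $G^0$ to be the corresponding product of simply connected groups over $\fl$ with $G^0(\fl)/Z_0\cong\Gamma^0$, and producing the representation on $V$ from Steinberg's lifting theorem. Your composition-factor argument only shows that $\Gamma$ has no \emph{quotient} of order $l$; perfection of $\Gamma^0$ does not rule out $\bbZ/l$ as a composition factor, and the paper uses instead that $\Gamma^0/Z(\Gamma^0)$ is a product of nonabelian simple groups while $Z(\Gamma^0)$ and $\Gamma/\Gamma^0$ have prime-to-$l$ order. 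Finally, your weight bound via ``$\mathfrak{sl}_2$-theory'' and $\alpha$-strings is unjustified for modular irreducibles; the paper obtains $\sum_{\alpha\in\Phi^+}\ang{\lambda,\alpha^\vee}<d$ for dominant weights $\lambda$ of $T_{/\flb}$ on $V$ from Proposition~3 of \cite{Ser94}, and then deduces the stated bound for arbitrary $\mu$ by Weyl conjugation.
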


\begin{proof}
Write $V = \bigoplus_i {W_i}$ as a direct sum of irreducible $\Gamma^0$-modules. Since $\dim W_i \leq l$ for all $i$, we see
that every element of $l$-power order in the image of $\Gamma^0 \rightarrow \GL(W_i)$ actually has order dividing $l$. Since
$\Gamma^0 \hookrightarrow \prod \GL(W_i)$, we deduce that every element of $\Gamma^0$ of $l$-power order actually has order
dividing $l$. Note that $\Gamma/\Gamma^0$ has order prime to~$l$.

\newcounter{step}\setcounter{step}{0}

\addtocounter{step}{1} \emph{Step \arabic{step}. We show that there exists a connected simply connected semisimple algebraic
  group $G^0$ over $\fl$ and a finite central subgroup $Z_0 \subset G^0(\fl)$ with $G^0(\fl)/Z_0 \cong \Gamma^0$.} Let
$\Gamma_i$ denote the image of~$\Gamma^0$ in $\GL(W_i)$. Note that $\Gamma_i$ has no non-trivial normal subgroup of $l$-power
order (since $\Gamma_i$ acts faithfully on $W_i$, and an $l$-group acting on a non-zero $\flb$-vector space has non-zero fixed
points). So by Theorem B of \cite{Gur99}, $\Gamma_i$ is a central product of quasisimple Chevalley groups. (Note that if $l =
11$ then $\dim W_i <7$.) Now $\Gamma^0$ is a subgroup of $\prod \Gamma_i$ that surjects onto each factor, so $Z(\Gamma^0) =
\Gamma^0 \cap \prod Z(\Gamma_i)$.  Thus $\Gamma^0/Z(\Gamma^0)$ is a subgroup of $\prod \Gamma_i/Z(\Gamma_i)$, a product of
simple Chevalley groups, that surjects onto each factor.  By a theorem of Hall (Lemma 3.5 in \cite{Kup}), $\Gamma^0/Z(\Gamma^0)$ is itself
isomorphic to a direct product of simple Chevalley groups. It follows that $\Gamma^0 = [\Gamma^0,\Gamma^0] Z(\Gamma^0)$.
Since $\Gamma^0$ is generated by elements of order~$l$ and $Z(\Gamma^0)$ is of order prime to~$l$, it follows moreover that
$\Gamma^0$ is perfect. Therefore $\Gamma^0$ is a perfect central extension of a product $\prod H_j$ of simple Chevalley
groups $H_j$, so there exists a surjective homomorphism $\pi : \prod \wt H_j \to \Gamma^0$ with central kernel, where $\wt H_j$ is the
universal perfect central extension of~$H_j$.

As $l > 3$ (to rule out Suzuki and Ree groups) there exist connected simply connected algebraic groups ${G_j}$ over $\fl$
such that $H_j \cong G_j(\fl)/Z(G_j(\fl))$.  (Note that $G_j$ is the restriction of scalars of an absolutely almost simple
algebraic group over a finite extension of~$\fl$.)  Since $l > 3$ it is known that $\wt H_j \cong G_j(\fl)$ (see section 6.1
in \cite{GLS3}, particularly table 6.1.3). So we can take $G^0 = \prod G_j$ and $Z_0 = \ker \pi$.

Since $\Gamma^0/Z(\Gamma^0)$ is a product of nonabelian simple groups and since $Z(\Gamma^0)$ and $\Gamma/\Gamma^0$ are
of order prime to~$l$, it follows that $\Gamma$ does not have any composition factor of order~$l$.

Let $G^0 \supset B \supset T$ denote a Borel and maximal torus defined over~$\fl$.

\addtocounter{step}{1}\newcounter{action-of-alg-grp}\setcounter{action-of-alg-grp}{\value{step}}
\emph{Step \arabic{step}. We lift $V$ to a $G^0_{/\flb}$-module and compare the actions of~$T(\fl)$ and
  $T(\flb)$ on $V$.}  Let $U$ denote the unipotent radical of $B$ and set $N=N_{G^0}(T)$. Let $B\op$ denote the
opposite Borel subgroup to $B$ containing $T$ and let $U\op$ denote its unipotent radical. (See Theorem 14.1 of \cite{Bor91}.
By uniqueness we see it is defined over $\fl$.) Let $X=X^*(T_{/\flb})$ with its subset $\Phi$ of roots and $\Phi^+$
(resp.\ $\Delta$) the set of positive (resp.\ simple) roots corresponding to $B$.
Let $X^+ \subset X$ be the subset of dominant weights.
There is a semisimple algebraic action of $G^0_{/\flb}$ on $V$, say $\phi:G^0_{/\flb}\rightarrow \GL(V)$, such
that:
\begin{enumerate}
\item the highest weight $\lambda$ of a simple submodule is restricted (i.e.\ $0 \leq \ang{\lambda,\alpha^\vee}<l$ for all $\alpha \in \Delta$),
\item the action of $G^0(\fl)$ is the one induced by the map $G^0(\fl) \rightarrow \Gamma^0$,
\item the subspaces $W_i$ are $G^0_{/\flb}$-stable.
\end{enumerate}
(This follows from a result of Steinberg: see Theorem~2.11 in~\cite{Humphreys}. Note that \cite{Humphreys} works with an
algebraic group $\mathbf{G}$ that is simple, but the proof given does not depend on that assumption.)  By Proposition 3 of
\cite{Ser94} we see that if $\lambda$ in $X^+$ is a weight of~$T_{/\flb}$ on~$V$ then $\sum_{\alpha \in \Phi^+}
\ang{\lambda,\alpha^\vee} < d$; in particular, $\ang{\lambda,\alpha^\vee}<(l-1)/2$ for all $\alpha \in \Phi^+$. (Note that
$\dim W_i \leq (l-1)/2$ and that the proof of that proposition does not require that $G^0_{/\flb}$ be almost simple.)  If
$\mu$ is a weight of $T_{/\flb}$ on $V$ then we see that there is $w$ in the Weyl group with $w \mu \in X^+$ and $0\leq
\ang{w \mu, \alpha^\vee} <(l-1)/2 $ for all $\alpha \in \Phi^+$, and we deduce that $|\ang{\mu,\alpha^\vee}|<(l-1)/2$ for all
$\alpha \in \Phi$. We also deduce that if $\mu$ is a weight of~$T_{/\flb}$ on~$\ad V$ then $|\ang{\mu,\alpha^\vee}|<l-1 $ for
all $\alpha \in \Delta$.

\addtocounter{step}{1} \newcounter{the-group-I}\setcounter{the-group-I}{\value{step}} \emph{Step \arabic{step}. The
  semisimple group $\o I \subset \GL(V)$ and its simply connected cover $I \subset G^0_{/\flb}$.}  Since $|\ang{\mu,
  \alpha^\vee}|<l/2$ for all weights $\mu$ of $T_{/\flb}$ on $V$ and all $\alpha \in \Delta$ we may apply
Lemma~\ref{lm:central-isogeny} to $\phi : G^0_{/\flb} \rightarrow \GL(V)$. We obtain connected simply connected semisimple
algebraic subgroups $I$, $J$ of $G^0_{/\flb}$ such that $G^0_{/\flb} = I \times J$, $\phi(J) = 1$, and $\phi$ induces a
central isogeny of~$I$ onto its image $\o I$, which is a semisimple algebraic group.  Note that $T_{/\flb} = T_I \times T_J$
and that $B_{/\flb} = B_I \times B_J$ where $(B_I,T_I)$ (resp.\ $(B_J,T_J)$) is a Borel and maximal torus in $I$ (resp.\ 
$J$).  Moreover $U_{/\flb} = U_I \times U_J$.  Let $\o B$, $\o T$, $\o U$, $\o B\op$, $\o U\op$ denote the images of $B_I$,
$T_I$, $U_I$, $B_I\op$, $U_I\op$ in $\overline{I}$. Then $\o T$ is a maximal torus of $\o I$, and $\o B$, $\o B\op$ are
opposite Borel subgroups containing it. Also $\o U$, $\o U\op$ are the unipotent radicals of $\o B$, $\o B\op$. Since $I \to
\o I$ is a central isogeny, $U_I \rightarrow \o U$ and $U_I\op \rightarrow \o U\op$ are isomorphisms.

\addtocounter{step}{1} \newcounter{exp-and-log}\setcounter{exp-and-log}{\value{step}} \emph{Step \arabic{step}. The maps
  $\log$ and $\exp$ provide inverse isomorphisms of varieties between $\o U \subset \GL(V)$ and $\Lie \o U \subset \ad V$.}
This follows from Lemma~\ref{lm:exp-and-log} applied to $\o I \subset \GL(V)$ since $\dim W_i \le l$ for all $i$ and
$|\ang{\mu, \alpha^\vee}|<l/2$ for all weights $\mu$ of $T_{/\flb}$ on $V$ and all $\alpha \in \Delta$.
(Note that $T_I \to \o T$ induces a bijection on coroots since $I \to \o I$ is a central isogeny; thus
$T \to \o T$ induces a surjection on coroots.)

\addtocounter{step}{1} \newcounter{rational-log}\setcounter{rational-log}{\value{step}} \emph{Step \arabic{step}. The
  $\flb$-span of $\log U(\fl)$ is $\Lie \o U$.}  Since $d \phi: \Lie U \rightarrow \Lie \o U$ is surjective, it suffices
to show that there is an isomorphism $\log : U \to \Lie U$ defined over $\fl$ such that $d \phi \circ \log = \log\circ \phi$.
Pick an $\fl$-structure on $V$.  The map $G^0_{/\flb} \to \GL(V)$ can be defined over some~$\bbF_{l^s}$ and so taking
restrictions of scalars from $\bbF_{l^s}$ to $\fl$ we get an $\fl$-vector space $V'$ and a map $\psi : G^0 \to \GL(V')$. The
map $G^0_{/\flb} \to \GL(V)$ is obtained from $\psi$ by extending scalars to $\flb$ and projecting to a direct summand $V$ of
$V' \otimes \flb$. The dimension of all irreducible factors of $V' \otimes \flb$ is at most $l$. Moreover for any weight
$\lambda$ of~$T_{/\flb}$ on~$V' \otimes \flb$ we have $|\ang{\lambda, \alpha^\vee}|<(l-1)/2$ for all $\alpha \in \Phi^+$.

By Lemma~\ref{lm:central-isogeny} we see that $\psi : G^0 \to \GL(V')$ is a central isogeny onto its image. (By construction we have
$(\ker \psi)(\fl) = Z_0$. Suppose that $\ker \psi$ is not finite. Then it has to contain one of the $\fl$-almost simple factors of
$G^0 = \prod G_j$. But $G_j(\fl)$ is nonabelian.)

In particular, $\psi$ induces an isomorphism $U \to \psi(U)$.
Then Lemma~\ref{lm:exp-and-log} (applied to the image of $\psi_{/\flb}$) gives the desired map $\log: U \to \Lie U \subset \ad V'$.

\addtocounter{step}{1}
\emph{Step \arabic{step}: Some properties of $G^0(\fl)$.} The pair $(B({\fl}),N({\fl}))$ is a split $BN$ pair in $G^0({\fl})$ (see
section 1.18 of \cite{Car93}). Also $U({\fl})$ is a Sylow $l$-subgroup of $G^0({\fl})$ and
$B({\fl})=N_{G^0({\fl})}(U({\fl}))=N_{G^0({\fl})}(B({\fl}))$ (see Proposition 2.5.1 of \cite{Car93}).

Moreover $T({\fl})$ is a Sylow $l$-complement in
$B({\fl})$. 
Note that $U\op({\fl})$ is $N({\fl})$-conjugate to $U({\fl})$. (The longest Weyl element $w_0$ is stable under Frobenius,
hence represented by an element $n_0 \in N(\fl)$. Then use that $U\op = n_0 U n_0^{-1}$.)  Moreover the second-last displayed
equation on page~74 (section 2.9) of \cite{Car93} shows that $U\op({\fl})$ is the unique $N({\fl})$-conjugate of $U({\fl})$
with trivial intersection with $U({\fl})$.

\addtocounter{step}{1}
\emph{Step \arabic{step}. We have $N({\fl})=N_{G^0({\fl})}(T({\fl}))$ so that
  $N_{G^0({\fl})}(T({\fl})) \cap N_{G^0({\fl})}(B({\fl})) = T({\fl})$ and $Z_0 \subset Z(G^0(\fl))
  \subset T(\fl)$.}

Suppose that $g$ is in $N_{G^0({\fl})}(T({\fl}))$. One can write
$g$ uniquely as $unu'$ where $u \in U({\fl}), n \in N({\fl})$ maps to $w_n$ in the Weyl group and $u' \in U_{w_n}$ in
the notation of Theorem 2.5.14 of \cite{Car93}. Then for any $h$ in $T({\fl})$ we can find $h'$ and $h''$ in $T({\fl})$
such that
\[hunu'=unu'h' \text{\quad and\quad } h''unu'=unu'h,\]
i.e.,
\[(huh^{-1})(hn)u'= u(nh')(h'^{-1}u'h')\]
and
\[(h''uh''^{-1})(h''n)u'=u(nh)(h^{-1}u'h).\]
As $T({\fl})$ normalizes $U({\fl})$ and $U_{w_n}$ and as
$w_{nh}=w_n=w_{hn}$ the uniqueness assertion of Theorem 2.5.14 of \cite{Car93} tells us that $huh^{-1}=u$ and $u'=h^{-1}u'h$.
Thus $u \in Z_{U({\fl})}(T({\fl}))$ and $u' \in Z_{U_{w_n}}(T({\fl})) \subset Z_{U({\fl})}(T({\fl}))$.
So it suffices to prove that $Z_{U(\flb)}(T({\fl}))={1}$. By Proposition 8.2.1 in \cite{Spr09}

it suffices to show that $Z_{U_\alpha(\flb)}(T({\fl}))={1}$ for all $\alpha \in \Phi^+$. By Proposition 8.1.1(i) in
\cite{Spr09}

it suffices that $\alpha$ is non-trivial on $T({\fl})$ for all $\alpha \in \Phi^+$.
As $l\geq 5$, this follows from Lemma~\ref{lm:torus}(i) (applied with $\Delta_*$ the set of simple coroots).

\addtocounter{step}{1} \emph{Step \arabic{step}. We find a subgroup $H$ of order prime to~$l$ such that $\Gamma = \Gamma^0 H$.}
Let $H$ denote the subgroup of $h \in \Gamma$ which normalize both the image of $B({\fl})$ and the image of $T({\fl})$
in $\Gamma^0$. Then by the previous paragraph we see that $H \cap \Gamma^0$ is $T({\fl})/Z_0$. Thus $H$ has order prime to~$l$.

Moreover if $\gamma \in \Gamma$ we see that $\gamma (B({\fl})/Z_0) \gamma^{-1}$ is the
normalizer of a Sylow $l$-subgroup of $G^0({\fl})/Z_0$ and hence $G^0({\fl})$-conjugate to $B({\fl})/Z_0$, say $\gamma
(B({\fl})/Z_0)\gamma^{-1}=k (B({\fl})/Z_0) k^{-1}$ with $k \in G^0({\fl})$. Then $k^{-1}\gamma
(T({\fl})/Z_0)\gamma^{-1}k$ is a Sylow $l$-complement in $B({\fl})/Z_0$ and hence (by Hall's theorem)
$B({\fl})/Z_0$-conjugate to $T({\fl})/Z_0$, say \[k^{-1}\gamma (T({\fl})/Z_0)\gamma^{-1}k = k'(T({\fl})/Z_0) k'^{-1}\] for
some $k' \in B({\fl})$. Then $(kk')^{-1}\gamma$ lies in $H$ and we deduce that $\Gamma$ is generated by $H$ and
$G^0({\fl})/Z_0 = \Gamma^0$.

\addtocounter{step}{1}\newcounter{lift-conj}\setcounter{lift-conj}{\value{step}} \emph{Step \arabic{step}. Lifting the
   conjugation action of~$H$ on~$\Gamma^0$ to $G^0$.}  We first show that $G^0_{/\flb}$ has no simple factor
$\SL_n$ with $l | n$ by showing that any such factor would act trivially on $V = \bigoplus W_i$, contradicting that
$G^0(\fl)/Z_0$ acts faithfully. So suppose that ${\SL_n}_{/\flb}$ has an irreducible module of dimension less than
$l-1$. Then by Proposition~3 in~\cite{Ser94} its highest weight $\lambda$ would satisfy $\sum \ang{\lambda,\alpha^\vee} <
l-1$, where $\alpha$ runs through the set of positive roots. A calculation shows that the left-hand side is at least $n-1$ if
$\lambda$ is non-zero. So if $n \ge l$, then $\lambda = 0$.

Next we claim that $d\phi : (\Lie G^0)(\flb) \to \ad V$ is injective on the subspace $(\Lie G^0)(\fl)$. Note first that it is
injective on $(\Lie U)(\fl)$ as $\phi$ is injective on $U(\fl)$. (Consider the isomorphism $\log : U(\fl) \to (\Lie U)(\fl)$
constructed in Step~\arabic{rational-log}.) Similarly $d\phi$ is injective on $(\Lie U\op)(\fl)$. Since $\phi$ maps $U$ to
$\o U$, $T$ to $\o T$, $U\op$ to $\o U\op$, and since $\Lie G^0 = \Lie U \oplus \Lie T \oplus \Lie U\op$, $\Lie \o I = \Lie
\o U \oplus \Lie \o T \oplus \Lie \o U\op$ it follows that the kernel of $d\phi$ on $(\Lie G^0)(\fl)$ is contained in $(\Lie
T)(\fl)$. But $(\Lie G^0)(\flb)$ contains no non-trivial abelian ideal by Lemma~\ref{lm:autom-lie-alg}.  This proves the
claim.

Note that $H$ acts by conjugation on $\GL(V)$ and $\ad V$, in particular it preserves the Lie algebra structure of $\ad V$.
By definition $H$ stabilises the image of $U(\fl)$ in $\GL(V)$ and hence by Step~\arabic{rational-log} it also stabilises
$\log U(\fl) = d\phi((\Lie U)(\fl))$. Because $U\op({\fl})$ is the unique $N_{G^0({\fl})}(T({\fl}))$-conjugate of $U({\fl})$
that has trivial intersection with $U({\fl})$, it is also stabilised by $H$. The previous argument
then shows that $H$ stabilises $d\phi((\Lie U\op)(\fl))$. Since $[\Lie U,\Lie U\op] = \Lie G^0$ (as we may check over
$\flb$), it follows that $H$ stabilises the image of $(\Lie G^0)(\fl)$ in $\ad V$. By extending scalars, we get a natural
action of $H$ on $(\Lie G^0)(\flb)$. This action lifts uniquely to an action on $G^0_{/\flb}$ by
Lemma~\ref{lm:autom-lie-alg}. 

We claim that with respect to the $H$-action on $G^0_{/\flb}$ just constructed, $\phi : G^0_{/\flb} \to \GL(V)$ is
$H$-equivariant.  We first show that the conjugation action of $H$ on $\GL(V)$ stabilises $\o I$.  If $h \in H$ then $h$
sends $U({\fl})$ to itself and hence $\log U({\fl})$ to itself and hence $\Lie \o U$ to itself and hence $\o U$ to itself.
Similarly $h$ stabilises $\o U\op$.  As the root subgroups generate $\o I$ (by Theorem~8.1.5 in \cite{Spr09}), we see that
$h$ indeed stabilises $\o I$. This action of~$H$ on~$\o I$ lifts uniquely to an action on the simply connected cover $I$
of~$\o I$. (For existence use Theorem 9.6.5 of \cite{Spr09} and the conjugation action of~$T_I$.  For uniqueness use the
semisimplicity of~$I$.) On the other hand, Lemma~\ref{lm:autom-lie-alg} shows that the $H$-action on $G^0_{/\flb}$ respects
the decomposition $G^0_{/\flb} = I \times J$. Since $J$ is killed by $\phi$ it suffices to show that the two $H$-actions on
$I$ (one coming from $\o I$ and one from $G^0_{/\flb}$) agree. By Lemma~\ref{lm:autom-lie-alg} we can check this on the Lie
algebra. The same lemma shows that $d\phi: \Lie I \to \Lie \o I$ is an isomorphism, since $\Lie I$ contains no non-trivial
abelian ideal. By construction both $H$-actions on $\Lie I$ are compatible with the $H$-action on $\Lie \o I$, so the two
$H$-actions on $I$ indeed agree. Therefore $\phi$ is $H$-equivariant. A fortiori, it extends to a homomorphism $G^0_{/\flb}
\rtimes H \to \GL(V)$.

Finally we show that the $H$-action on $G^0_{/\flb}$ descends to $G^0$. Suppose that $h \in H$ and $\sigma \in
\Gal(\flb/\fl)$. The automorphism $\sigma h\sigma^{-1} h^{-1}$ is trivial on $(\Lie G^0)(\fl)$, hence trivial
on $(\Lie G^0)(\flb)$, hence trivial on $G^0_{/\flb}$ by Lemma~\ref{lm:autom-lie-alg}. Therefore
the $H$-action indeed descends to $G^0$.

By construction, the image of $G^0(\fl) \rtimes H$ is $\Gamma$.
Let $G = G^0 \rtimes H$ and $r : G_{/\flb} \to \GL(V)$ the homomorphism we just obtained. 
It remains to show that $r$ is semisimple. But this follows from Lemma~5(b) in \cite{Ser94} since the restriction of $r$ to
$G^0_{/\flb}$ is semisimple and $(G:G^0)$ is prime to~$l$.
\end{proof}

We remark that for the purpose of proving Theorem~\ref{thm:main} we do not need an $H$-action on $G^0$, we only need an
$H$-action on $G^0_{/\flb}$ that is compatible with the $H$-action on $\GL(V)$. Since $G^0_{/\flb} = I \times J$, we can lift
the $H$-action on $\o I$ to $I$ as above and let $H$ act arbitrarily on $J$; for this it is not necessary to appeal to
Lemma~\ref{lm:autom-lie-alg}.

\begin{lm}\label{lm:ss-elts-span}
  Suppose that $G$ is a linear algebraic group over~$\flb$ such that the connected component $G^0$ is semi-simple and simply
  connected and such that $l$ does not divide $(G:G^0)$. Let $G^0\supset B\supset T$ denote a Borel subgroup and a maximal
  torus and let $\cT$ denote the normalizer of the pair $(B,T)$ in $G$. Then the $G^0(\flb)$-conjugates of $\cT(\flb)$ equal
  the semisimple elements of $G(\flb)$ and they are Zariski dense in $G$. In particular, if $V$ is an irreducible
  representation of $G$ then the $G^0(\flb)$-conjugates of $\cT(\flb)$ span $\ad V$ over~$\flb$.
\end{lm}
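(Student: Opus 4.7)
The plan is to establish the three assertions in order.

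First, for the identification of $G^0(\flb)$-conjugates of $\cT(\flb)$ with the semisimple elements of $G(\flb)$: I would observe that $\cT \cap G^0 = N_{G^0}(B) \cap N_{G^0}(T) = T$, using $N_{G^0}(B) = B$ and $N_B(T) = T$, so that $\cT/T$ injects into $G/G^0$ and has order prime to~$l$. For $t \in \cT(\flb)$, the Jordan decomposition $t = t_s t_u$ lies entirely in the closed subgroup $\cT$, and $t_u$ has $l$-power order, so it projects trivially to $G/G^0$; hence $t_u \in G^0 \cap \cT = T$, which forces $t_u = 1$ since $T$ contains no non-trivial unipotent element. Conversely, given a semisimple $s \in G(\flb)$, conjugation by~$s$ is a finite-order automorphism of $G^0$ of order prime to~$l$, and Steinberg's theorem on fixed points of semisimple automorphisms produces an $s$-stable Borel $B' \subset G^0$ containing an $s$-stable maximal torus $T'$; conjugating by a $g \in G^0(\flb)$ with $g(B',T')g^{-1} = (B,T)$ puts $gsg^{-1}$ in $N_G(B) \cap N_G(T) = \cT$.

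Next, for Zariski density, I decompose $G = \bigsqcup_i \tau_i G^0$ with $\tau_i \in \cT$, so the semisimple elements of $\tau_i G^0$ are the $G^0(\flb)$-conjugates of $\tau_i T$. It suffices to show that the morphism $\phi_i : G^0 \times T \to \tau_i G^0$ given by $(g,t) \mapsto g \tau_i t g^{-1}$ is dominant. Letting $\sigma$ denote conjugation by $\tau_i$ on $G^0$ (which preserves $(B,T)$), a direct computation at $(1,t)$ identifies $d\phi_i$, after left translation by~$t$, with the map $\Lie G^0 \oplus \Lie T \to \Lie G^0$ given by $(X,Y) \mapsto (\mathrm{Ad}(t^{-1})\sigma^{-1} - 1)(X) + Y$. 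In the root-space decomposition $\Lie G^0 = \Lie T \oplus \bigoplus_\alpha \frg^\alpha$, the operator $\mathrm{Ad}(t^{-1})\sigma^{-1}$ permutes root spaces within each $\sigma^*$-orbit cyclically, and since $\sigma$ preserves the set of positive roots, the product of the cyclic scaling around a single orbit equals $\chi(t^{-1})$ for some non-zero sum of positive roots $\chi$. For generic~$t$ this product is not a root of unity, so $1 - \mathrm{Ad}(t^{-1})\sigma^{-1}$ is invertible on the root subspace. Adding $\Lie T$ on the right then shows $d\phi_i$ is surjective.

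Finally, the spanning statement follows formally: let $W \subset \ad V$ denote the $\flb$-span of the $G^0(\flb)$-conjugates of $\cT(\flb)$. Then $W$ is Zariski closed in $\ad V$, and since the representation morphism $r : G \to \GL(V) \hookrightarrow \ad V$ sends a Zariski-dense subset of~$G$ into $W$ by the first two parts, $r(G) \subset W$ by continuity. Burnside's theorem applied to the irreducible representation~$V$ gives that the $\flb$-span of $r(G)$ is all of $\ad V$, so $W = \ad V$. The main obstacle is the density argument: tracking the $\sigma^*$-action on roots for a general~$\tau_i$ and verifying the generic invertibility requires some care, though one could alternatively cite density of semisimple elements in each connected component of a linear algebraic group as a black-box result.
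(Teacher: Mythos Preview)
Your argument is correct and structurally parallel to the paper's, but the density step is handled differently. The paper invokes Lemma~4 of Springer~\cite{Spr06} as a black box: for each coset $G^0 h$ with $h \in \cT(\flb)$, that lemma (after adjusting $h$ by an element of $T(\flb)$ so that conjugation by $h$ becomes a diagram automorphism) gives directly that $\{g(th)g^{-1} : g \in G^0(\flb),\ t \in T(\flb)\}$ is Zariski dense in $G^0 h$. You instead prove this density from scratch by computing the differential of $(g,t) \mapsto g\tau_i t g^{-1}$ and showing it is surjective for generic $t$ via the root-space decomposition. This is essentially a hands-on proof of Springer's lemma in the case at hand; it is more self-contained, at the cost of the bookkeeping you flag. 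One small correction: the product of scalings around a $\sigma^*$-orbit of length $m$ is $c_\alpha \cdot \chi(t^{-1})$, where $c_\alpha$ is the scalar by which $\sigma^{-m}$ acts on the one-dimensional space $\frg^\alpha$ (not necessarily $1$), and over $\flb$ every nonzero element is a root of unity, so the condition you want is simply $\chi(t^{-1}) \ne c_\alpha^{-1}$; since $\chi$ is a nonzero sum of positive roots this still cuts out a proper closed subset of $T$, and the genericity argument goes through unchanged.

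For the first assertion both you and the paper invoke Steinberg's theorem (Theorem~7.5 of \cite{Steinberg}) and the observation $\cT \cap G^0 = T$; for the spanning assertion both combine density with Burnside, the paper via the trace pairing and you via Zariski closure of a linear subspace --- these are equivalent.
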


\begin{proof}
  By Theorem~7.5 in~\cite{Steinberg} every semisimple element of $G(\flb)$ is $G^0(\flb)$-conjugate to an element of
  $\cT(\flb)$.  The converse is clear as $\cT \cap G^0 = T$, an element $g \in G(\flb)$ is semisimple iff $g$ is of order
  prime to~$l$, and $l$ does not divide $(G:G^0)$.  Next we have $G = G^0 \cT$ since Borel subgroups in $G^0$ are conjugate
  and maximal tori in $B$ are conjugate. Consider a fixed coset $G^0h$ with $h \in \cT(\flb)$. By Lemma~4 of~\cite{Spr06} the
  elements $g(th)g^{-1} = [gt(hgh^{-1})^{-1}]h$ of $G^0 h$, where $t$ runs over $T(\flb)$ and $g$ runs over $G^0(\flb)$, are
  Zariski dense in $G^0h$.  (Lemma~4 of~\cite{Spr06} does not immediately apply to $h$ as $h$ is not a diagram automorphism.
  However for some $s \in T(\flb)$ the automorphism $g \mapsto s hgh^{-1} s^{-1}$ is a diagram automorphism and hence the
  elements $g t (hgh^{-1})^{-1} = g ts^{-1} (s hgh^{-1} s^{-1})^{-1} s$ as $t$ runs over $T(\flb)$ and $g$ runs over $G^0(\flb)$
  are Zariski dense in $G^0$.) Thus the $G^0(\flb)$-conjugates of $\cT(\flb)$ are Zariski dense in $G(\flb)$.
  For the last claim note that if $\tr(gw) = 0$ for some $w \in \ad V$ and some Zariski dense subset of $g \in G(\flb)$, then $w = 0$.
\end{proof}

The proof of our main theorem relies on Proposition~\ref{prop:alg-rep} and thus on the classification of finite simple groups.
(It still holds without it for $l$ sufficiently large, depending on~$d$ and ineffective, due to the results of Larsen and Pink~\cite{LP}.)

\begin{thm}\label{thm:main}
  Suppose that $V$ is a finite-dimensional $\flb$-vector space and that $\Gamma \subset \GL(V)$ is a finite subgroup that
  acts irreducibly on $V$. Let $\Gamma^0 \subset \Gamma$ be the subgroup generated by elements of $l$-power order. Then $V$
  is a semisimple $\Gamma^0$-module. Let $d \geq 1$ be the maximal dimension of an irreducible $\Gamma^0$-submodule of $V$.
  Suppose that $l \geq 2(d+1)$. Then:
  \begin{enumerate}
  \item $H^0(\Gamma, \ad^0 V) = H^1(\Gamma, \ad^0 V) = H^1(\Gamma,\flb)=0.$
  \item The set $\Gamma\ss$ spans $\ad V$ as an $\flb$-vector space.
 \end{enumerate}
 In particular, for any finite subfield $k$ of $\flb$ containing the eigenvalues of all elements of $\Gamma$ and such that
 $\Gamma \subset \GL_n(k)$, $\Gamma$ is adequate.
\end{thm}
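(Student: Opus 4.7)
The plan is to realize $\Gamma$ as (essentially) the $\fl$-points of an algebraic group via Proposition~\ref{prop:alg-rep}, reduce questions about $\Gamma$ to ones about this algebraic group over~$\flb$ using the descent lemmas, and invoke cohomological vanishing for groups of Lie type for the remaining $H^1$.

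First I apply Proposition~\ref{prop:alg-rep} to obtain $G = G^0 \rtimes H$ over~$\fl$, with $G^0$ semisimple simply connected and $H$ of order prime to~$l$, together with a semisimple $r : G_{/\flb} \to \GL(V)$ satisfying $r(G(\fl)) = \Gamma$. Fix an $\fl$-rational Borel $B$ and maximal torus $T \subset G^0$, and let $\cT \subset G$ denote the normalizer of $(B,T)$, so $\cT = T \rtimes H$. The proposition provides the weight bound $|\ang{\mu, \alpha^\vee}| < (l-1)/2$ for all weights $\mu$ of $T$ on $V$ and all roots~$\alpha$. Since $V$ is $\Gamma$-irreducible it is a fortiori $G(\flb)$-irreducible, so $r(G(\flb))$ spans $\ad V$ by Burnside.

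For (ii), observe that $\cT(\fl) = T(\fl) \rtimes H$ consists of elements of order prime to~$l$, hence semisimple, so all $G^0(\fl)$-conjugates of $r(\cT(\fl))$ lie in $\Gamma\ss$. Suppose $w \in \ad V$ is trace-orthogonal to $\Gamma\ss$. Lemma~\ref{lm:torus}(ii), applied with $\Delta_*$ the Frobenius-stable set of simple coroots, shows that the $\flb$-span of $T(\fl)$ in $\ad V$ equals that of $T(\flb)$; multiplying by lifts of elements of~$H$ extends this to $\cT(\fl)$ versus $\cT(\flb)$. By linearity, $\tr(r(t) r(h)^{-1} w r(h)) = 0$ for all $h \in G^0(\fl)$ and $t \in \cT(\flb)$. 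Fixing~$t$, the function $h \mapsto \tr(r(hth^{-1})w)$ on $G^0$ is regular and vanishes on $G^0(\fl)$, which is Zariski dense in~$G^0$ by Lang--Steinberg; hence it vanishes on all of $G^0(\flb)$. By Lemma~\ref{lm:ss-elts-span} applied to $G_{/\flb}$, the $G^0(\flb)$-conjugates of $\cT(\flb)$ are Zariski dense in $G(\flb)$, and combined with Burnside they $\flb$-span $\ad V$, forcing $w = 0$.

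For (i), the vanishing $H^1(\Gamma, \flb) = 0$ is immediate from inflation--restriction: $|\Gamma/\Gamma^0|$ is prime to~$l$ and $\Gamma^0$ is perfect (by Proposition~\ref{prop:alg-rep}), so $H^1(\Gamma, \flb) \hookrightarrow \Hom(\Gamma^0, \flb)^{\Gamma/\Gamma^0} = 0$. Schur's lemma gives $(\ad V)^\Gamma = \flb \cdot \Id$, and a short Clifford-theoretic argument (the decomposition $V|_{\Gamma^0} = e \bigoplus_i V_i$ has $e, s$ dividing $|\Gamma/\Gamma^0|$, and $\dim V_i \le d < l/2$) yields $l \nmid \dim V$, so $(\ad^0 V)^\Gamma = 0$. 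The remaining vanishing $H^1(\Gamma, \ad^0 V) = 0$ is the main obstacle: I would reduce via inflation--restriction to $H^1(G^0(\fl), \ad^0 V)^{\Gamma/\Gamma^0}$ (absorbing the prime-to-$l$ central kernel $Z_0$), lift $\ad V$ to a rational $G^0_{/\fl}$-module whose weights are restricted (bounded by $l-1$), and invoke a Cline--Parshall--Scott--van der Kallen-type comparison with the rational cohomology $H^1(G^0, \ad^0 V)$, which vanishes in the small-weight regime. Finally, (ii) together with the first lemma of the excerpt gives Condition~(C), and combined with (i) this verifies all the axioms for adequacy of $\Gamma \subset \GL_n(k)$.
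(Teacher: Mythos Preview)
Your overall architecture---realise $\Gamma$ via Proposition~\ref{prop:alg-rep}, pass from $\cT(\fl)$ to $\cT(\flb)$ using Lemma~\ref{lm:torus}, then invoke Lemma~\ref{lm:ss-elts-span}---matches the paper's. But the step that bridges $G^0(\fl)$ to $G^0(\flb)$ is wrong, and the $H^1$ argument is not the one that actually works here.

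\medskip
\textbf{The Zariski density claim in (ii) is false.} You assert that the regular function $h \mapsto \tr(r(hth^{-1})w)$ vanishes on $G^0(\fl)$, ``which is Zariski dense in $G^0$ by Lang--Steinberg''. But $G^0(\fl)$ is a \emph{finite} set, and a finite subset of a positive-dimensional irreducible variety is never Zariski dense (e.g.\ $x^{l-1}-1$ vanishes on $\bbG_m(\fl)$). Lang's theorem concerns surjectivity of $g \mapsto g^{-1}\Fr(g)$, not density of rational points. So this step does not go through, and the passage from $G^0(\fl)$-conjugates to $G^0(\flb)$-conjugates collapses.

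The paper closes this gap by a module-theoretic argument rather than a geometric one: it shows that $\ad V$ is \emph{semisimple} as a $G^0_{/\flb}$-module with all dominant weights restricted (using Serre's tensor-product criterion applied to $\Hom(W_i,W_j)$ with $\dim W_i + \dim W_j < l+2$). Steinberg's theorem then implies that the irreducible $G^0_{/\flb}$-constituents remain irreducible and pairwise non-isomorphic over $G^0(\fl)$, so every $G^0(\fl)$-submodule of $\ad V$ is automatically a $G^0(\flb)$-submodule. This is what lets one upgrade the $G^0(\fl)$-span to the $G^0(\flb)$-span without any density statement.

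\medskip
\textbf{The $H^1$ argument.} Your proposed route via a CPSvdK comparison $H^1(G^0(\fl),\ad^0 V) \cong H^1(G^0,\ad^0 V)$ is delicate: the generic-cohomology comparison requires $q$ large or additional twist arguments, and making it work at $q = l$ with the given weight bounds would need a careful statement you have not supplied. The paper avoids this entirely: since $\dim W_i + \dim W_j \le l-2$, Guralnick's Theorem~A gives $\Ext^1_{\Gamma^0}(W_i,W_j) = 0$ directly (applied to the faithful quotient of $\Gamma^0$, which has no normal $l$-subgroup because $\Gamma$ has no composition factor of order $l$). Then $H^1(\Gamma,\ad V) = \bigoplus_{i,j} H^1(\Gamma^0,\Hom(W_i,W_j))^{\Gamma} = 0$, and $\ad^0 V$ being a direct summand (as $l \nmid \dim V$) finishes it. This is both shorter and avoids the comparison theorem altogether.

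\medskip
A minor point: in your Clifford argument you assert that the multiplicity $e$ divides $|\Gamma/\Gamma^0|$. That is not what Clifford theory gives; rather $e = \dim U'$ for a projective representation $U'$ of the (prime-to-$l$) stabiliser quotient, whence $e$ is prime to~$l$. The conclusion $l \nmid \dim V$ survives, but the stated reason does not.
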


\begin{proof}
Write $V = \bigoplus_i {W_i}$ as a direct sum of irreducible $\Gamma^0$-modules. Note that $\Gamma/\Gamma^0$ has order prime to~$l$.

We claim that $\dim V$ is prime to~$l$. Let $U$ be an irreducible constituent of $V$ as a $\Gamma^0$-module and let $V'$ be
the $U$-isotypic direct summand of $V$. Since $\Gamma$ acts transitively on the set of isotypic components and as
$(\Gamma:\Gamma^0)$ is prime to~$l$, it suffices to show that $\dim V'$ is prime to~$l$.  Let $\Gamma' \supset \Gamma^0$ be
the stabiliser of $V'$. Then $V'$ is an irreducible $\Gamma'$-module.  By Theorem 51.7 in \cite{curtis-reiner},
$U$ extends to a projective representation of $\Gamma'$ and there is an irreducible projective
representation $U'$ of $\Gamma'/\Gamma^0$ such that $V' \cong U \otimes U'$ (as projective $\Gamma'$-representation). The
claim follows as $\dim U < l$ and $\Gamma'/\Gamma^0$ is of order prime to~$l$.

By Proposition~\ref{prop:alg-rep} there exists an algebraic group $G = G^0 \rtimes H$ over~$\fl$ and a semisimple 
representation $r : G_{/\flb} \to \GL(V)$, where $G^0$ is connected simply connected semisimple, $H$ is a finite group of
order prime to $l$, and $r(G(\fl)) = \Gamma$. Moreover $\Gamma$ has no composition factor of order~$l$, which implies
that no quotient of $\Gamma^0$ contains a non-trivial normal $l$-subgroup.

We have
\[H^1(\Gamma, \ad V) = \bigoplus_{i,j} H^1(\Gamma^0, \Hom(W_i, W_j))^\Gamma\]
and
\[H^1(\Gamma^0, \Hom(W_i, W_j)) = \Ext^1_{\Gamma^0}(W_i, W_j),\]
which vanishes by \cite{Gur99}, Theorem A, since $\dim W_i +
\dim W_j \leq l-2$. (We apply that theorem to the quotient of $\Gamma^0$ that acts faithfully. Note that we saw above
that this quotient does not have a non-trivial normal $l$-subgroup.)
Similarly, $2 \leq l- 2$ implies that $H^1(\Gamma, \flb) = 0$.
Since $\dim V$ is prime to~$l$ it follows that $H^0(\Gamma,\ad^0 V) = 0$ and that $\ad^0 V$ is a direct summand of $\ad V$,
so $H^1(\Gamma,\ad^0 V) = 0$. This proves the first part above.

Let $G^0 \supset B \supset T$ denote a Borel and maximal torus defined
over~$\fl$. Proposition~\ref{prop:alg-rep} also shows that $|\ang{\mu,\alpha^\vee}| < (l-1)/2$ for all weights $\mu$
of~$T_{/\flb}$ on~$V$ and all $\alpha \in \Delta$. In particular, all dominant weights of $T_{/\flb}$ on~$V$ and~$\ad V$ are restricted.
Note that if $W$ is a semisimple $G^0_{/\flb}$-module such that all dominant weights of $T_{/\flb}$ on~$W$ are restricted, then 
every $G^0({\fl})$-submodule of~$W$ is also a $G^0_{/\flb}$-submodule. We apply this first to $V$
(which is semisimple as $G^0_{/\flb}$-module, since $r$ is semisimple), so the $W_i$ are $G^0_{/\flb}$-submodules. By Proposition~8 of \cite{Ser94} we see that $\ad V = \bigoplus_{i,j} \Hom(W_i,W_j)$ is a semisimple $G^0_{/\flb}$-module. (Note
that $\dim W_i + \dim W_j <l+2$.) Thus every $G^0({\fl})$-submodule of~$\ad V$ is also a $G^0_{/\flb}$-submodule.

By Lemma~\ref{lm:torus} (applied with $\Delta_*$ the set of simple coroots), the $\flb$-linear span of the image of
$T(\fl)$ in $\ad V$ equals the $\flb$-linear span of the image of $T(\flb)$.
Thus the $G^0({\fl})$-submodule of~$\ad V$ generated by the $\flb$-linear span of~$r(H)$ equals the
$G^0(\flb)$-submodule generated by $r(T(\flb)H)$.  By Lemma~\ref{lm:ss-elts-span} (noting that $\cT(\flb) = T(\flb)H$)
it follows that $r(H)$ spans $\ad V$.  As $r(H) \subset \Gamma\ss$, this completes the proof.
\end{proof}

\bibliographystyle{alpha}
\bibliography{ModLiftBib}

\end{document}